\newtheorem{thm}{Theorem}[section]
\newtheorem{lem}[thm]{Lemma}
\theoremstyle{definition} 
\newtheorem{defn}[thm]{Definition}
\theoremstyle{remark}
\newtheorem{ques}[thm]{Question}
\newtheorem*{ack}{Acknowledgements}
\title{Notes on Frobenius stable direct images}
\author{Sho Ejiri}
\address{Department of Mathematics, Graduate School of Science, Osaka Metropolitan University, Osaka City, Osaka 558-8585, Japan}
\email{shoejiri.math@gmail.com}
\begin{document}
\maketitle
\markboth{SHO EJIRI}{Notes on Frobenius stable direct images}
\begin{abstract}
In this note, 
we prove the coherence of Frobenius stable direct images in a new case. 
We also show a generation theorem regarding to it. 
Furthermore, we prove a corresponding theorem in characteristic zero. 
\end{abstract}
\section{Introduction}
Let $X$ be a normal projective variety over 
an algebraically closed field of positive characteristic,
let $\Delta$ be an effective $\mathbb Q$-Weil divisor on $X$,  
and let $M$ be a Cartier divisor on $X$. 
In~\cite{Sch14} (cf. \cite{Pat14}), Schwede introduced the subspace 
$$
S^0(X,\sigma(X,\Delta) \otimes \mathcal O_X(M))
\subseteq
H^0(X, \mathcal O_X(M)), 
$$  
which is defined as the stable image of 
the trace maps of iterated Frobenius morphisms. 
This notion was relativized by Hacon and Xu~\cite{HX15}
to establish the three dimensional minimal model program 
in positive characteristic (Definition~\ref{defn:S0}). 
For a morphism $f:X\to Y$ to a variety $Y$, they define the subsheaf 
$$
S^0f_*(\sigma(X,\Delta) \otimes \mathcal O_X(M)) 
\subseteq
f_* \mathcal O_X(M) 
$$
by a way similar to that of $S^0(X,\sigma(X,\Delta) \otimes \mathcal O_X(M))$. 
From the definition, we cannot not see whether or not the sheaf 
$
S^0f_*(\sigma(X,\Delta)\otimes\mathcal O_X(M))
$ 
is coherent. 
The coherence is proved in \cite[Proposition~2.15]{HX15}, 
under the assumption that $M-(K_X+\Delta)$ is $f$-ample.  
In this note, we show the coherence of the sheaf under a weaker assumption:
\begin{thm} \label{thm:main-p1}
Let the base field be an $F$-finite field. 
Let $X$ be a normal projective variety 
and let $\Delta$ be an effective $\mathbb Q$-Weil divisor on $X$ such that 
$i(K_X+\Delta)$ is Cartier for an integer $i>0$ not divisible by $p$. 
Let $f:X\to Y$ be a morphism to a projective variety $Y$ of dimension $n$. 
Let $M$ be a Cartier divisor on $X$. 
If $M-(K_X+\Delta)$ is relatively semi-ample over 
an open subset $V\subset Y$, then 
$$
\mathrm{Im}\left(f_* \phi_{(X,\Delta)}^{(e)}(M) \right) |_V
=
S^0f_* (\sigma(X,\Delta) \otimes \mathcal O_X(M)) |_V
$$ 
for $e$ large and divisible enough. In particular, 
$
S^0f_* (\sigma(X,\Delta) \otimes \mathcal O_X(M)) 
$
is coherent over $V$. 
\end{thm}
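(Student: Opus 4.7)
The plan is to combine a relative ample-model construction over $V$ with the known $f$-ample case \cite[Proposition~2.15]{HX15}, by pushforward through a Stein-type factorization.

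First, I would set up the chain. Since $\gcd(i,p)=1$, choose $e_0>0$ with $i\mid p^{e_0}-1$, so that for every $e$ divisible by $e_0$ the divisor $(p^e-1)(K_X+\Delta)$ is Cartier and the trace
\[
\phi^{(e)}_{(X,\Delta)}(M)\colon F^e_*\mathcal{O}_X\!\bigl(M+(p^e-1)(M-K_X-\Delta)\bigr)\longrightarrow \mathcal{O}_X(M)
\]
is a morphism of coherent sheaves on $X$. Pushing forward by the projective morphism $f$ yields coherent subsheaves $T_e:=\mathrm{Im}\bigl(f_*\phi^{(e)}_{(X,\Delta)}(M)\bigr)\subseteq f_*\mathcal{O}_X(M)$, which form a descending chain via the standard composition of traces. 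By noetherianity of $Y$ this chain stabilizes; the content of the theorem is to pin down the stable value on $V$ as $S^0f_*(\sigma(X,\Delta)\otimes\mathcal{O}_X(M))|_V$, which by definition is the intersection of the images $T_e$.

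Next, I would extract from the relative semi-ampleness of $A:=M-(K_X+\Delta)$ over $V$ a projective factorization $f|_{f^{-1}(V)}=g\circ\pi$ with $\pi\colon f^{-1}(V)\to Z$, $g\colon Z\to V$, together with an integer $m\geq 1$ and a $g$-ample Cartier divisor $H$ on $Z$ such that $mA|_{f^{-1}(V)}\sim\pi^*H$. For every $e$ divisible by both $e_0$ and $m$, the source of $\phi^{(e)}_{(X,\Delta)}(M)$ restricted to $f^{-1}(V)$ becomes $F^e_*\bigl(\mathcal{O}_X(M)\otimes\pi^*\mathcal{O}_Z(\tfrac{p^e-1}{m}H)\bigr)$. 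Applying $\pi_*$ and the projection formula (combined with the commutation of $\pi_*$ with $F^e_*$) identifies the $\pi_*$-image of the trace with a morphism
\[
F^e_*\bigl(\pi_*\mathcal{O}_X(M)\otimes\mathcal{O}_Z(\tfrac{p^e-1}{m}H)\bigr)\longrightarrow \pi_*\mathcal{O}_X(M)
\]
in which the perturbation $\tfrac{p^e-1}{m}H$ is now $g$-ample. I would then adapt the Serre-vanishing/generation portion of the proof of \cite[Proposition~2.15]{HX15} to this abstract trace-like map on $Z$ to conclude that, after applying $g_*$, the image over $V$ stabilizes for $e$ large and divisible enough, and that this stabilized image agrees with $T_e|_V$ and hence with $S^0f_*|_V$.

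The main obstacle I foresee is that $\phi^{(e)}_{(X,\Delta)}(M)$ is not itself the trace map of a log pair on $Z$, so \cite[Proposition~2.15]{HX15} cannot be invoked on $(Z,g)$ verbatim. One instead has to verify that the abstract Frobenius-type morphism on $Z$ constructed above, whose source is a $g$-ample twist of the coherent sheaf $\pi_*\mathcal{O}_X(M)$, still satisfies the key surjectivity/stabilization needed for the argument. I expect this to reduce to relative Serre vanishing applied to $\pi_*\mathcal{O}_X(M)\otimes\mathcal{O}_Z(\tfrac{p^e-1}{m}H)$; the bookkeeping that $\pi$ exists only over $V$, and that the resulting stabilization on $Z$ matches the restriction of $T_e$ to $V$, is then the final routine step, with coherence of $S^0f_*|_V$ following as an immediate consequence.
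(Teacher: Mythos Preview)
There is a genuine error early on: the claim ``By noetherianity of $Y$ this chain stabilizes'' is false. Noetherianity gives the ascending chain condition on coherent subsheaves, not the descending one (witness $(x)\supsetneq(x^2)\supsetneq\cdots$ in $k[x]$). Proving that the chain $T_e$ stabilizes over $V$ is precisely the content of the theorem; once it stabilizes, equality with $S^0f_*$ is tautological from the definition, so there is nothing further to ``pin down''.

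Setting that slip aside, your factorization strategy --- pass to the relative model $\pi\colon f^{-1}(V)\to Z$ and imitate the $g$-ample argument of \cite[Proposition~2.15]{HX15} on $Z$ --- is genuinely different from the paper's route, but it runs into a structural obstacle you do not address: $V$ is merely open in the projective variety $Y$, so $g\colon Z\to V$ has only a quasi-projective base. The stabilization mechanism in the HX15 argument rests on uniform global generation together with finite-dimensionality of $H^0$ over a \emph{projective} base; both are unavailable over $V$, and you give no compactification. The paper instead keeps everything over the projective $Y$ throughout: with $N:=M-K_X-\Delta$ it uses the multiplication maps $f_*\mathcal O_X(dN)\otimes f_*\mathcal O_X(mN+M)\to f_*\mathcal O_X((d+m)N+M)$ (surjective over $V$ by relative semi-ampleness), writes $p^e-1=q_ed+r_e$ with bounded $r_e$, and exhibits $\mathcal I^{(e)}\otimes\mathcal L^{n+1}$ as a quotient of ${F_Y^e}_*(\mathcal G\otimes\mathcal L^{p^e(n+1)-q_e})$ for a fixed coherent $\mathcal G$ on $Y$; Castelnuovo--Mumford regularity and Serre vanishing on $Y$ then give global generation on $V$ for all large $e$, after which finite-dimensionality of $H^0(Y,\cdot)$ forces $\mathcal I^{(e)}|_V=\mathcal I^{(e')}|_V$ for $e\ge e'\gg0$.
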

We also prove a generation theorem on 
$S^0f_*(\sigma(X,\Delta)\otimes \mathcal O_X(M))$. 
\begin{thm} \label{thm:main-p2}
With the notation of Theorem~\ref{thm:main-p1},
if $M-(K_X+\Delta)$ is nef and $f$-semi-ample, then the sheaf 
$$
\left( S^0f_*(\sigma(X,\Delta) \otimes \mathcal O_X(M)) \right) 
\otimes \mathcal L^n \otimes \mathcal A
$$
is generated by its global sections for ample line bundles $\mathcal L$ and $\mathcal A$ with $|\mathcal L|$ free.
\end{thm}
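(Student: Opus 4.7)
The plan is to apply Mumford's Castelnuovo--Mumford regularity criterion. Set $\mathcal F := S^0 f_*(\sigma(X,\Delta) \otimes \mathcal O_X(M))$; by Theorem~\ref{thm:main-p1} with $V = Y$ (allowed because the relative semi-ampleness of $M - (K_X+\Delta)$ is assumed over all of $Y$), the sheaf $\mathcal F$ is coherent. Mumford's criterion then reduces the desired global generation of $\mathcal F \otimes \mathcal L^n \otimes \mathcal A$ to the vanishings
$$
H^i\bigl(Y,\ \mathcal F \otimes \mathcal A \otimes \mathcal L^{n-i}\bigr) = 0 \qquad \text{for } 1 \le i \le n,
$$
i.e., to the $n$-regularity of $\mathcal F \otimes \mathcal A$ with respect to the free ample line bundle $\mathcal L$.

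The key preliminary is a projection-formula identity. Tensoring the trace map $\phi_{(X,\Delta)}^{(e)}(M)$ by $\mathcal A \otimes \mathcal L^{n-i}$ on $Y$, and applying the projection formula along $f$ and along the absolute Frobenius, one identifies the resulting map with the trace map $\phi_{(X,\Delta)}^{(e)}(M + f^*N_i)$, where $N_i$ is a Cartier divisor on $Y$ with $\mathcal O_Y(N_i) \cong \mathcal A \otimes \mathcal L^{n-i}$. Passing to stable images yields
$$
\mathcal F \otimes \mathcal A \otimes \mathcal L^{n-i} \;\cong\; S^0 f_*\bigl(\sigma(X,\Delta) \otimes \mathcal O_X(M + f^*N_i)\bigr).
$$
Because $f^*N_i$ is nef on $X$ and numerically trivial on fibers of $f$, the divisor $M + f^*N_i - (K_X+\Delta)$ remains nef and $f$-semi-ample, so Theorem~\ref{thm:main-p1} applies and gives, for each sufficiently large and divisible $e$, a surjection
$$
F^{e}_* f_*\mathcal O_X(\hat L_{e,i}) \twoheadrightarrow \mathcal F \otimes \mathcal A \otimes \mathcal L^{n-i}, \qquad \hat L_{e,i} := p^e(M + f^*N_i) - (p^e-1)(K_X+\Delta).
$$

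To extract the vanishing, I rewrite the source of the surjection (using the projection formula once more) as $F^{e}_*\!\bigl(f_*\mathcal O_X(L_e) \otimes (\mathcal A \otimes \mathcal L^{n-i})^{p^e}\bigr)$, with $L_e := p^e M - (p^e-1)(K_X+\Delta)$. Fujita vanishing applied (for each fixed $e$) to the coherent sheaf $f_*\mathcal O_X(L_e)$ and the ample bundle $\mathcal A \otimes \mathcal L^{n-i}$ kills the higher cohomology of the source on $Y$ once $p^e$ exceeds the relevant Fujita constant. In the top degree $i = n$ this immediately gives $H^n(\mathcal F \otimes \mathcal A) = 0$, since the kernel of the surjection contributes an $H^{n+1}$ that vanishes by dimension. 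For $1 \le i < n$, the long exact sequence leaves a potential obstruction $H^{i+1}(\mathcal K_e)$ coming from the kernel; my plan to kill it is to exploit the Frobenius-stability of Theorem~\ref{thm:main-p1}: for $e' \ge e$ both large, the compatible trace maps $F^{e'}_{*} \to F^{e}_{*}$ induce an inverse system on the kernels whose cohomology becomes Frobenius-stably zero, and a diagram chase combined with Fujita vanishing in degrees $i$ and $i+1$ of the source then yields the desired vanishing.

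The principal obstacle is this last transfer step: the Fujita constant of $f_*\mathcal O_X(L_e)$ depends \emph{a priori} on $e$, so one must show that it is uniformly dominated by $p^e$. My plan is either to achieve such a uniform bound directly by exploiting the relative semi-ampleness of $H := M - (K_X+\Delta)$---which controls the section rings $f_*\mathcal O_X(L_e)$ as $e$ varies---or, failing that, to run an induction on $n - i$ starting from $i = n$, using each previously established vanishing to control the kernel obstruction in the next step.
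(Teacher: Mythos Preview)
Your setup is correct through the identification
\[
\mathcal F \otimes \mathcal A \otimes \mathcal L^{n-i} \;\cong\; S^0 f_*\bigl(\sigma(X,\Delta)\otimes\mathcal O_X(M+f^*N_i)\bigr)
\]
and the surjection from $S_e:=F_Y^{e}{}_*\bigl(f_*\mathcal O_X(L_e)\otimes(\mathcal A\otimes\mathcal L^{n-i})^{p^e}\bigr)$. The gap is in the passage from cohomology of the source to cohomology of the target. From the short exact sequence $0\to\mathcal K_e\to S_e\to \mathcal F\otimes\mathcal A\otimes\mathcal L^{n-i}\to 0$, vanishing of $H^i(S_e)$ gives only an injection $H^i(\mathcal F\otimes\mathcal A\otimes\mathcal L^{n-i})\hookrightarrow H^{i+1}(\mathcal K_e)$, and you have no control over the right-hand side. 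Your two proposed fixes do not close this: the ``Frobenius-stably zero'' claim for the inverse system $\{H^{i+1}(\mathcal K_e)\}_e$ is not something Theorem~\ref{thm:main-p1} provides (it only stabilizes the \emph{images}, not the kernels, and the $\mathcal K_e$ grow with $e$), and the induction on $n-i$ gives you $H^{i+1}(\mathcal F\otimes\mathcal A\otimes\mathcal L^{n-i-1})=0$, which says nothing about $H^{i+1}(\mathcal K_e)$ for the twist $\mathcal L^{n-i}$. The uniform-Fujita-constant problem you flag is real, and even if you solved it you would still be stuck at the kernel step for $1\le i<n$.

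The paper sidesteps both problems by never trying to prove vanishing for $\mathcal F$ itself. Global generation, unlike cohomology vanishing, \emph{does} pass along surjections, so it suffices to exhibit a globally generated sheaf surjecting onto $\mathcal I^{(e)}\otimes\mathcal L^n\otimes\mathcal A$ for $e\gg0$. The missing ingredient is Lemma~\ref{lem:glgen2}: since $N=M-(K_X+\Delta)$ is nef and $f$-semi-ample, there is a \emph{single} $l$ such that $f_*\mathcal O_X(mN+M)\otimes\mathcal A^l$ is globally generated for all large $m$. Writing $\mathcal A^{p^e}=\mathcal A^l\otimes\mathcal A^{p^e-l}$ inside $S_e$ and using this global generation replaces the $e$-dependent sheaf $f_*\mathcal O_X((p^e-1)N+M)$ by a direct sum of copies of $\mathcal O_Y$; the source becomes $\bigoplus\bigl(F_Y^{e}{}_*\mathcal A^{p^e-l}\bigr)\otimes\mathcal L^n$, which one checks directly is $0$-regular with respect to $\mathcal L$ for $e\gg0$. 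This simultaneously resolves the uniformity issue (the only sheaf whose regularity you compute lives on $Y$ and involves only $\mathcal A$) and the kernel issue (nothing needs to be said about kernels).
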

For a related result, see \cite[Theorem~1.11]{Zha19s}.
Furthermore, we show a corresponding theorem in characteristic zero 
to Theorem~\ref{thm:main-p2}. 
\begin{thm} \label{thm:main-0}
Let the base field be a field of characteristic zero. 
Let $X$ be a normal projective variety 
and let $\Delta$ be an effective $\mathbb Q$-Weil divisor on $X$ 
such that $K_X+\Delta$ is $\mathbb Q$-Cartier. 
Let $f:X\to Y$ be a morphism to a projective variety $Y$ of dimension $n$. 
Let $M$ be a Cartier divisor on $X$. 
If $M-(K_X+\Delta)$ is nef and $f$-semi-ample, then 
\begin{align*}
f_*(\mathcal J(X,\Delta)(M)) \otimes \mathcal L^n \otimes \mathcal A
\quad \textup{and} \quad 
f_*(\mathcal J_{\mathrm{NLC}}(X,\Delta)(M)) 
\otimes \mathcal L^n \otimes \mathcal A
\end{align*}
are generated by its global sections for ample line bundles 
$\mathcal L$ and $\mathcal A$ with $|\mathcal L|$ free. 
In particular, if, furthermore, $(X,\Delta)$ is log canonical, 
then $f_*\mathcal O_X(M) \otimes \mathcal L^n \otimes \mathcal A$ is generated by its global sections. 
\end{thm}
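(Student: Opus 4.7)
\emph{Strategy.} The plan is to pass to a log resolution of $(X,\Delta)$, rewrite $f_{*}\mathcal J(X,\Delta)(M)$ and $f_{*}\mathcal J_{\mathrm{NLC}}(X,\Delta)(M)$ as direct images of a line bundle on a smooth SNC pair, and then invoke the log canonical version of the Popa--Schnell global generation theorem (due to Fujino). Thus the proof will realise Theorem~\ref{thm:main-0} as the characteristic-zero counterpart of Theorem~\ref{thm:main-p2}, with the role of Frobenius-stable images played by multiplier/NLC ideals and the role of $S^{0}$-theory played by Kollár--Ambro--Fujino vanishing.

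\emph{Multiplier ideal part.} Take a log resolution $\pi\colon\tilde X\to X$ and write $K_{\tilde X}+B=\pi^{*}(K_X+\Delta)$, so that $(\tilde X,\{B\})$ is SNC klt. Set $g:=f\circ\pi$ and $\tilde M:=\pi^{*}M-\lfloor B\rfloor$; a short computation yields
\begin{equation*}
\tilde M-(K_{\tilde X}+\{B\})=\pi^{*}\bigl(M-(K_X+\Delta)\bigr),
\end{equation*}
which is nef and $g$-semi-ample by hypothesis. Local vanishing for multiplier ideals gives $\pi_{*}\mathcal O_{\tilde X}(\tilde M)=\mathcal J(X,\Delta)(M)$ and $R^{i}\pi_{*}\mathcal O_{\tilde X}(\tilde M)=0$ for $i>0$, so $f_{*}\mathcal J(X,\Delta)(M)=g_{*}\mathcal O_{\tilde X}(\tilde M)$. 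We then apply the Popa--Schnell/Fujino generation theorem for SNC klt pairs to $g\colon\tilde X\to Y$ and the Cartier divisor $\tilde M$: the nef plus $g$-semi-ample condition on $\tilde M-(K_{\tilde X}+\{B\})$ is precisely what the theorem requires, and its conclusion is that $g_{*}\mathcal O_{\tilde X}(\tilde M)\otimes\mathcal L^{n}\otimes\mathcal A$ is generated by its global sections, which is the first assertion.

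\emph{NLC part and the lc consequence.} For $\mathcal J_{\mathrm{NLC}}(X,\Delta)$ we repeat the construction, but now we keep the coefficient-$1$ components of $B$ inside the boundary on $\tilde X$ and truncate the components with coefficient $>1$, obtaining a Cartier divisor $\tilde M_{\mathrm{NLC}}$ together with an SNC \emph{log canonical} boundary $\tilde\Delta_{\mathrm{lc}}$ on $\tilde X$ such that $\tilde M_{\mathrm{NLC}}-(K_{\tilde X}+\tilde\Delta_{\mathrm{lc}})=\pi^{*}(M-(K_X+\Delta))$ and $\pi_{*}\mathcal O_{\tilde X}(\tilde M_{\mathrm{NLC}})=\mathcal J_{\mathrm{NLC}}(X,\Delta)(M)$. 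The klt generation step is then replaced by Fujino's log canonical version of the generation theorem, which again takes exactly the nef plus $g$-semi-ample hypothesis. The final assertion is immediate, since $(X,\Delta)$ log canonical forces $\mathcal J_{\mathrm{NLC}}(X,\Delta)=\mathcal O_{X}$.

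\emph{Main obstacle.} The reduction via the log resolution is fairly mechanical; what does the real work is the Popa--Schnell/Fujino generation theorem, whose internal mechanism is Castelnuovo--Mumford regularity fuelled by Kollár-type vanishing on $Y$. The klt case of this is essentially the classical Popa--Schnell result. The genuinely delicate step is the NLC case, where one has to handle lc strata on $\tilde X$ honestly, which forces the use of Fujino's injectivity and vanishing theorems for SNC log canonical pairs (in the spirit of quasi-log canonical structures) rather than the straightforward Kollár--Kawamata--Viehweg machinery; this is where we expect the bulk of the technical weight of the proof to lie.
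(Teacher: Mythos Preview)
Your reduction to a log resolution is exactly what the paper does: pass to $\pi\colon Z\to X$ with $K_Z+\Delta_Z=\pi^*(K_X+\Delta)$, set $\Delta'=\{\Delta_Z\}$ (resp.\ $\{\Delta_Z\}+\Delta_Z^{=1}$) and $M'=\pi^*M-\lfloor\Delta_Z\rfloor$ (resp.\ $\pi^*M-\lfloor\Delta_Z\rfloor+\Delta_Z^{=1}$), and observe that $M'-(K_Z+\Delta')=\pi^*(M-(K_X+\Delta))$ is nef and $g$-semi-ample for $g=f\circ\pi$. So far this matches the paper line for line, and the higher local vanishing you mention is not actually needed, only $\pi_*\mathcal O_Z(M')\cong\mathcal J_{(\mathrm{NLC})}(X,\Delta)(M)$.

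The gap is in what comes next. You invoke a ``Popa--Schnell/Fujino generation theorem'' whose hypothesis is \emph{nef and $g$-semi-ample}, but a result in that precise form is not available off the shelf; the existing statements in the literature require $f$-ample, nef and big, or nef and log big hypotheses, and the paper explicitly frames Theorem~\ref{thm:main-0} as new, pointing only to \emph{related} results in \cite{FM21i} and \cite{Fuj19}. In other words, the black box you cite is essentially the theorem itself. The paper does not cite a generation theorem here; it proves the needed vanishing directly. The mechanism you correctly guess in your last paragraph (Castelnuovo--Mumford regularity driven by Koll\'ar/Ambro/Fujino vanishing) is right, but the substantive step you are missing is how to get from ``nef and $g$-semi-ample'' to the hypotheses of that vanishing. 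The paper does this as follows: since $M'-(K_Z+\Delta')$ is $g$-semi-ample one factors $g=\tau\circ\sigma$ with $M'-(K_Z+\Delta')\sim_{\mathbb Q}\sigma^*N$ for $N$ nef and $\tau$-ample; then for each $0<i\le n$ the $\mathbb Q$-divisor $N+\tau^*\bigl((n-i)L+\tfrac12 A\bigr)$ is ample, so $M'-(K_Z+\Delta')+g^*\bigl((n-i)L+\tfrac12 A\bigr)$ is genuinely semi-ample on $Z$, allowing one to choose a general effective $F^{(i)}$ in its class with $\Delta^{(i)}:=\Delta'+F^{(i)}$ still SNC with coefficients $\le 1$. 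After this absorption, $M'+g^*((n-i)L+A)-(K_Z+\Delta^{(i)})\sim_{\mathbb Q} g^*(\tfrac12 A)$, and now \cite[Theorem~3.2]{Amb03} or \cite[Theorem~6.3]{Fuj11} gives $H^i\bigl(Y,\,g_*\mathcal O_Z(M')\otimes\mathcal L^{n-i}\otimes\mathcal A\bigr)=0$, hence $0$-regularity with respect to $\mathcal L$. This factor-and-perturb manoeuvre is the actual content of the proof, and it is absent from your proposal.
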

Here, $\mathcal J(X,\Delta)$ (resp. $\mathcal J_{\mathrm{NLC}}(X,\Delta)$) 
is the multiplier ideal (resp. the non-lc ideal sheaf)
associated to $(X,\Delta)$ (Definition~\ref{defn:non-lc}). 
Theorem~\ref{thm:main-0} should be compared with~\cite[Corollary~1.7]{FM21i}.
Also, for several results related to Theorem~\ref{thm:main-0}, 
see \cite[Section~9]{Fuj19}.
 
Theorem~\ref{thm:main-0} does not hold in positive characteristic. 
Indeed, Moret-Bailly~\cite{MB81} constructed a semi-stable fibration 
$g:S\to \mathbb P^1$ of characteristic $p>0$, 
where $S$ is a smooth projective surface, 
such that 
$
g_*\omega_S \otimes \mathcal O(2) 
\cong \mathcal O(-1)\oplus\mathcal O(p)
$
(see also \cite[Proposition~3.16]{SZ20}).

The above theorems are proved in Section~\ref{section:proofs}. 
In Section~\ref{section:question}, we consider a question that generalizes 
Fujita's freeness conjecture. 
\begin{ack}
The author wises to express his thanks to Professors Osamu Fujino, Masataka Iwai and Shin-ichi Matsumura for helpful comments. 
He is grateful to Professor Shunsuke Takagi for a fruitful question. 
\end{ack}
\section{Terminologies and definitions} \label{section:definitions}
In this section, we define some terminologies and notions. 
 
Let $k$ be a field of characteristic $p\ge 0$. 
By a \textit{variety} we mean an integral separated scheme of finite type over $k$. 

Let $X$ be a normal variety and 
let $\Delta=\sum_{i=1}^d \delta_i\Delta_i$ be a $\mathbb Q$-Weil divisor on $X$, 
where each $\Delta_i$ is a prime divisor. 
We define the \textit{round down} $\lfloor \Delta \rfloor$ 
(resp. \textit{round up} $\lceil \Delta \rceil$) 
of $\Delta$ by 
$ \lfloor \Delta \rfloor := \sum_{i=1}^d \lfloor\delta_i\rfloor \Delta_i $
(resp. $ \lceil \Delta \rceil := \sum_{i=1}^d \lceil\delta_i\rceil \Delta_i $). 
Also, we use the following notation: 
\begin{align*}
\{\Delta\}:=\Delta-\lfloor\Delta\rfloor; \qquad 
\Delta^{=1} := \sum_{\delta_i=1} \Delta_i; \qquad
\Delta^{>1} := \sum_{\delta_i>1} \Delta_i; \qquad
\Delta^{<1} := \sum_{\delta_i<1} \Delta_i. 
\end{align*}

Assume that $p>0$. Let $X$ be a variety. Let $F_X^e:X\to X$ denote 
the $e$-times iterated absolute Frobenius morphism of $X$. 

Let $k$ be an $F$-field field, 
i.e., a field of characteristic $p>0$ with $[k:k^p]<+\infty$. 
Let $X$ be a normal variety 
and let $\Delta$ be an effective $\mathbb Q$-Weil divisor on $X$ 
such that $i(K_X+\Delta)$ is Cartier for an integer $i>0$ not divisible by $p$. 
Let $M$ be a Cartier divisor on $X$. 
Let $f:X\to Y$ be a projective morphism to a variety $Y$. 
We define $S^0f_*(\sigma(X,\Delta) \otimes \mathcal O_X(M))$. 
Let $e_0$ be the smallest positive integer such that $i|(p^{e_0}-1)$. 
For each $e\ge1$ with $e_0|e$, 
applying $\mathcal Hom_{\mathcal O_X}((?),\mathcal O_X(M))$ to the composite of
$$
\mathcal O_X 
\xrightarrow{{F_X^e}^\sharp} {F_X^e}_*\mathcal O_X
\hookrightarrow {F_X^e}_* \mathcal O_X((p^e-1)\Delta), 
$$
we obtain the morphism 
$$
\phi^{(e)}_{(X,\Delta)}(M):{F_X^e}_* \mathcal O_X((1-p^e)(K_X+\Delta) +p^eM)
\to \mathcal O_X(M) 
$$
by the Grothendieck duality. Pushing this forward by $f$, we get 
$$
f_*\phi^{(e)}_{(X,\Delta)}(M):{F_Y^e}_*f_*\mathcal O_X((1-p^e)(K_X+\Delta) +p^eM)
\to f_* \mathcal O_X(M). 
$$
Note that $f_*{F_X^e}_* = {F_Y^e}_* f_*$. 
By the construction, we see that $f_*\phi^{(e)}_{(X,\Delta)}(M)$ factors through 
$f_*\phi^{(e')}_{(X,\Delta)}(M)$ for $e\ge e'\ge 1$ with $e_0|e$ and $e_0|e'$, 
so 
$$
\mathrm{Im}\left(f_*\phi^{(e)}_{(X,\Delta)}(M)\right)
\subseteq \mathrm{Im}\left(f_*\phi^{(e')}_{(X,\Delta)}(M)\right). 
$$
\begin{defn} \label{defn:S0}
With the above notation, we define 
$$
S^0f_*(\sigma(X,\Delta) \otimes \mathcal O_X(M))
:=\bigcap_{e\ge1,~e_0|e} \mathrm{Im}\left(f_*\phi^{(e)}_{(X,\Delta)}(M)\right)
\subseteq f_*\mathcal O_X(M). 
$$
\end{defn}
We cannot see from the definition whether or not 
$S^0f_*(\sigma(X,\Delta)\otimes \mathcal O_X(M))$ is coherent. 
 
Next, we define the multiplier ideal sheaf $J(X,\Delta)$ and 
the non-lc ideal sheaf $\mathcal J_{\mathrm{NLC}}(X,\Delta)$. 
\begin{defn} \label{defn:non-lc}
Let the base field be a field of characteristic zero.  
Let $X$ be a normal projective variety and 
let $\Delta$ be an effective $\mathbb Q$-Weil divisor such that 
$K_X+\Delta$ is $\mathbb Q$-Cartier. 
Let $\pi:Z\to X$ be a resolution of $X$ with 
$
K_Z+\Delta_Z = \pi^*(K_X+\Delta)
$
such that $\mathrm{Supp}(\Delta_Z)$ is simple normal crossing. 
\begin{itemize}
\item We define the \textit{multiplier ideal sheaf} $\mathcal J(X,\Delta)$ by 
$$
\mathcal J(X,\Delta):= \pi_*\mathcal O_Z\left(-\lfloor \Delta_Z\rfloor\right). 
$$
\item (\cite[Definition~2.1]{Fuj10}) 
We define the \textit{non-lc ideal sheaf} $\mathcal J_{\mathrm{NLC}}(X,\Delta)$ 
by 
$$
\mathcal J_{\mathrm{NLC}}(X,\Delta)
:=\pi_*\mathcal O_Z \left( \lceil -(\Delta_Z^{<1}) \rceil 
-\lfloor \Delta_Z^{>1} \rfloor \right)
=\pi_*\mathcal O_Z\left( -\lfloor \Delta_Z \rfloor +\Delta_Z^{=1}\right). 
$$
\end{itemize}
\end{defn}
By \cite[Proposition~2.6]{Fuj10}, 
we see that $\mathcal J_{\mathrm{NLC}}(X,\Delta)$ is 
independent of the choice of the resolution $\pi:Z\to X$, 
so $\mathcal J_{\mathrm{NLC}}(X,\Delta)$ is well-defined. 
We see from the definition that 
$\mathcal J(X,\Delta) = \mathcal O_X$
(resp. $\mathcal J_{\mathrm{NLC}}(X,\Delta) =\mathcal O_X$)
if and only if $(X,\Delta)$ is Kawamata log terminal (resp. log canonical). 
\section{Proofs of the theorems} \label{section:proofs}
We first prove Theorem~\ref{thm:main-p1}. 
\begin{proof}[Proof of Theorem~\ref{thm:main-p1}]
We prove (1). We use the notation in Section~\ref{section:definitions}. 
Put $\mathcal I^{(e)}:=\mathrm{Im}\left(f_*\phi^{(e)}_{(X,\Delta)}(M)\right)$ 
for each $e\ge 1$ with $e_0|e$. 
We show that there is an ample line bundle $\mathcal L$ on $Y$ 
such that $\mathcal I^{(e)}\otimes\mathcal L$ 
is globally generated on $V$ for $e\gg 0$ with $e_0|e$. 
If this holds, then for each $e\ge e'\gg 0$ with $e_0|e$ and $e_0|e'$, 
we have 
$$
\xymatrix{ 
H^0(Y, \mathcal I^{(e')} \otimes \mathcal L) \otimes_k \mathcal O_Y 
\ar@{->}[r] & \mathcal I^{(e')} \otimes \mathcal L
\\ H^0(Y, \mathcal I^{(e)} \otimes \mathcal L) \otimes_k \mathcal O_Y
\ar@{->}[r] \ar@{=}[u] & \mathcal I^{(e)} \otimes \mathcal L, \ar@{^(->}[u]
}
$$
where $k$ is the base field and the horizontal arrows are surjective on $V$. 
Note that the equality follows from the fact that 
the space of global sections is of finite dimension. 
Hence, 
$
(\mathcal I^{(e)} \otimes \mathcal L)|_V
=(\mathcal I^{(e')} \otimes \mathcal L)|_V,
$ 
which implies that $\mathcal I^{(e)}|_V=\mathcal I^{(e')}|_V$. 
 
Set $N:=M-K_X-\Delta$. Then $iN$ is Cartier. 
Since $N$ is relatively semi-ample over $V$, 
there is $d\ge 2$ with $i|d$ and $m_0\ge1$ such that 
the natural morphism 
$$
f_* \mathcal O_X(dN)
\otimes 
f_* \mathcal O_X(mN +M)
\to 
f_* \mathcal O_X((d+m)N +M)
$$
is surjective on $V$ for $m\ge m_0$ with $i|m$. 
Let $\mathcal L$ be an ample line bundle on $Y$ such that $\mathcal L$ and 
$
f_*\mathcal O_X(dN) \otimes \mathcal L
$
are globally generated.
We prove that $\mathcal I^{(e)}\otimes \mathcal L^{n+1}$ is 
globally generated on $V$ for each $e\gg0$ with $e_0|e$. 
Let $q_e$ and $r_e$ be integers such that 
$p^e-1=q_ed+r_e$ and $m_0\le r_e < m_0 +d$. 
Note that if $e_0|e$, then $i|r_e$. 
Put $\mathcal G:=\bigoplus_{m_0\le r<m_0+d,~i|r} f_*\mathcal O_X(rN+M)$. 
We then have the following sequence of morphisms that are surjective on $V$ 
for each $e\ge1$ with $e_0|e$:
\begin{align*}
& \mathcal I^{(e)} \otimes \mathcal L^{n+1}
\\ \xleftarrow{\left( f_*\phi^{(e)}_{(X,\Delta)}(M)\right) \otimes \mathcal L^{n+1}} &
{F_Y^e}_* \left( f_*\mathcal O_X((p^e-1)N +M) \otimes \mathcal L^{p^e(n+1)} 
\right) 
\\ \leftarrow & 
{F_Y^e}_* \left( S^{q_e}(f_*\mathcal O_X(dN)) \otimes f_*\mathcal O_X(r_eN +M) 
\otimes \mathcal L^{p^e(n+1)} \right)
\\ \cong & 
{F_Y^e}_* \left( S^{q_e}(f_*\mathcal O_X(dN) \otimes \mathcal L) 
\otimes f_*\mathcal O_X(r_eN +M) \otimes \mathcal L^{p^e(n+1)-q_e} \right) 
\\ \twoheadleftarrow & 
{F_Y^e}_* \left( \left( \bigoplus \mathcal O_Y \right) 
\otimes f_*\mathcal O_X(r_eN +M) \otimes \mathcal L^{p^e(n+1)-q_e} \right)
\\ \cong & 
\bigoplus {F_Y^e}_* \left( f_*\mathcal O_X(r_eN +M) 
\otimes \mathcal L^{p^e(n+1)-q_e} \right)
\\ \twoheadleftarrow & 
\bigoplus {F_Y^e}_* \left( \mathcal G \otimes\mathcal L^{p^e(n+1)-q_e} \right). 
\end{align*}
Therefore, it is enough to show that 
$
{F_Y^e}_*\left(\mathcal G \otimes \mathcal L^{p^e(n+1)-q_e} \right)
$
is globally generated for $e\gg0$. 
We check that the sheaf is $0$-regular with respect to $\mathcal L$ 
in the sense of Castelnuovo--Mumford (\cite[Theorem~1.8.5]{Laz04I}).
For each $0< j \le n$, we have 
\begin{align*}
H^j\left(Y, {F_Y^e}_*\left(\mathcal G 
\otimes \mathcal L^{p^e(n+1)-q_e} \right) \otimes \mathcal L^{-j} \right)
= & H^j\left(Y, {F_Y^e}_*\left(\mathcal G 
\otimes \mathcal L^{p^e(n+1-j)-q_e} \right) \right)
\\ = & H^j\left(Y, \mathcal G \otimes \mathcal L^{p^e(n+1-j)-q_e} \right). 
\end{align*}
Note that $F_Y^e$ is finite, since $k$ is $F$-finite. 
By $q_e/p^e \xrightarrow{e\to +\infty}1/d<1$,
we get $p^e(n+1-j)-q_e \xrightarrow{e\to +\infty} +\infty$, 
so our claim follows from the Serre vanishing theorem. 
\end{proof}
Next, we show Theorem~\ref{thm:main-p2}. 
To this end, we need Lemma~\ref{lem:glgen2}. 
To prove the lemma, we use the following: 
\begin{lem}[\textup{\cite[Lemma~3.4]{Eji19p}}] \label{lem:glgen1}
Let $f:X\to Y$ be a morphism between projective varieties over a field. 
Let $\mathcal F$ be a coherent sheaf on $X$. 
Let $D$ be an ample Cartier divisor on $X$. 
Then there exists an integer $m_0\ge 1$ such that 
$$
f_* \mathcal F(mD+N)
$$
is generated by its global sections for each $m\ge m_0$ 
and every nef Cartier divisor $N$ on $X$. 
\end{lem}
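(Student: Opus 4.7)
The plan is to reduce the problem to Castelnuovo--Mumford regularity on $Y$ and then supply the required uniform-in-$N$ vanishing via Fujita's vanishing theorem. Let $n=\dim Y$ and fix a globally generated ample line bundle $\mathcal L$ on $Y$. By Mumford's regularity criterion (as used in the proof of Theorem~\ref{thm:main-p1}), the sheaf $f_*\mathcal F(mD+N)$ will be generated by its global sections as soon as it is $0$-regular with respect to $\mathcal L$, i.e.\ as soon as
\[
H^j\bigl(Y,\, f_*\mathcal F(mD+N)\otimes\mathcal L^{-j}\bigr)=0 \quad \text{for } j=1,\ldots,n.
\]
It therefore suffices to exhibit an integer $m_0$, independent of the nef Cartier divisor $N$, for which these vanishings hold whenever $m\ge m_0$.

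By the projection formula, the above group equals $H^j\bigl(Y,\, f_*(\mathcal F(mD+N)\otimes f^*\mathcal L^{-j})\bigr)$. Since $\mathcal L$ is globally generated, $f^*\mathcal L$ is nef on $X$, and the ampleness of $D$ produces an integer $m_2$ such that $mD-jf^*\mathcal L$ is nef for every $m\ge m_2$ and $0\le j\le n$. Then I would apply Fujita's vanishing theorem to $\mathcal F$ and the ample divisor $D$: there exists $m_1$ with $H^i(X,\mathcal F(m_1 D+P))=0$ for all $i>0$ and every nef Cartier divisor $P$ on $X$. Taking $P=(m-m_1)D-jf^*\mathcal L+N$, which is a sum of nef divisors as soon as $m-m_1\ge m_2$, yields
\[
H^i\bigl(X,\,\mathcal F(mD+N)\otimes f^*\mathcal L^{-j}\bigr)=0 \quad \text{for } i>0,
\]
uniformly in the nef $N$ and in $j\le n$.

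To transfer this vanishing back to $Y$, I would invoke the corresponding relative statement: since $D$ is $f$-ample, the relative form of Fujita's theorem provides $m_3$ such that $R^i f_*\bigl(\mathcal F(mD+N)\otimes f^*\mathcal L^{-j}\bigr)=0$ for $i>0$, $m\ge m_3$, again uniformly in the nef $N$ and in $j\le n$. The Leray spectral sequence then degenerates in the relevant range, identifying cohomology on $Y$ with cohomology on $X$, and the desired vanishing---hence the global generation---follows for every $m\ge\max(m_1+m_2,\,m_3)$.

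The main obstacle is precisely the uniformity in the varying nef divisor $N$: a direct application of Serre vanishing would yield bounds depending on $N$ and would not suffice. It is Fujita's vanishing theorem, in its absolute form on $X$ and its relative form over $Y$, that bypasses this dependence; once these two uniform vanishings are in hand, the remainder is a routine combination of the projection formula, the Leray spectral sequence, and Mumford's regularity criterion.
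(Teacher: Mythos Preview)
Your argument is correct. The paper does not supply its own proof of this lemma; it simply quotes it from \cite[Lemma~3.4]{Eji19p}. Your route---Castelnuovo--Mumford regularity on $Y$ combined with Fujita's vanishing theorem (in both its absolute and relative forms, the latter due to Keeler) to obtain the uniform-in-$N$ vanishing---is the standard way to establish this statement, and is almost certainly the argument given in the cited reference. The key point, which you identify correctly, is that ordinary Serre vanishing would give bounds depending on $N$, and only Fujita's uniform vanishing eliminates that dependence; after that, the projection formula and Leray spectral sequence transfer the vanishing from $X$ to $Y$ as you describe.

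One small remark: you can streamline the relative step by noting $R^i f_*(\mathcal F(mD+N)\otimes f^*\mathcal L^{-j})\cong R^i f_*(\mathcal F(mD+N))\otimes\mathcal L^{-j}$ via the projection formula, so the twist by $\mathcal L^{-j}$ is irrelevant for the vanishing of higher direct images and only the relative Fujita bound for $R^i f_*(\mathcal F(mD+N))$ is needed.
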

\begin{lem} \label{lem:glgen2}
Let $f:X\to Y$ be a morphism between projective varieties over a field. 
Let $\mathcal F$ be a coherent sheaf on $X$. 
Let $D$ be a nef and $f$-semi-ample Cartier divisor on $X$. 
Let $\mathcal A$ be an ample line bundle on $Y$. 
Then there exists integers $n_0\ge 1$ and $l_0\ge 1$ such that 
$$
f_* \mathcal F(nD) \otimes \mathcal A^l
$$
is generated by its global sections for all $n\ge n_0$ and $l\ge l_0$. 
\end{lem}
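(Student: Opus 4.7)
The plan is to reduce from $X$ to the variety $Z$ appearing in the relative semi-ample fibration of $D$ over $Y$, where $D$ becomes relatively ample, and then to apply Lemma~\ref{lem:glgen1} on $Z$.

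First I fix $k \ge 1$ with $kD$ globally generated relative to $f$, which exists by $f$-semi-ampleness. This produces a factorization $f = \pi \circ g$ where $g : X \to Z$ is a projective $Y$-morphism (obtained as the image of $X$ in $\mathbb{P}_Y(f_*\mathcal O_X(kD))$ with reduced structure) and $\pi : Z \to Y$, such that $kD = g^*L$ for some $\pi$-ample Cartier divisor $L$ on $Z$. A key observation is that $L$ is in fact nef on $Z$: since $g$ is surjective, any irreducible curve $C \subset Z$ is the image of some irreducible curve $\tilde C \subset X$ mapping finite-surjectively onto $C$ (obtained by cutting down an irreducible component of $g^{-1}(C)$ by general hyperplane sections), and nefness of $D$ then gives $\deg(g|_{\tilde C}) \cdot (L \cdot C) = (kD \cdot \tilde C) \ge 0$.

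Since $L$ is nef and $\pi$-ample and $\mathcal A$ is ample on $Y$, I fix $l_0' \ge 1$ such that $H_Z := L + l_0' \pi^*\mathcal A$ is ample on $Z$. For each $r \in \{0, 1, \dots, k-1\}$, applying Lemma~\ref{lem:glgen1} to the morphism $\pi$, the coherent sheaf $g_*\mathcal F(rD)$ on $Z$, and the ample divisor $H_Z$ produces an integer $a_r \ge 1$ such that $\pi_*(g_*\mathcal F(rD) \otimes \mathcal O_Z(a_r H_Z + N'))$ is globally generated for every nef Cartier divisor $N'$ on $Z$. Setting $a_{\max} := \max_r a_r$, $n_0 := k a_{\max}$, and $l_0 := l_0' a_{\max}$, for $n \ge n_0$ and $l \ge l_0$ I write $n = a k + r$ and take $N' := (a - a_r) L + (l - a_r l_0') \pi^*\mathcal A$. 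This is nef since $a \ge a_r$, $l \ge a_r l_0'$, and both $L$ and $\pi^*\mathcal A$ are nef. The identity $a_r H_Z + N' = aL + l\pi^*\mathcal A$ combined with the projection formula $\pi_*(g_*\mathcal F(rD) \otimes \mathcal O_Z(aL + l\pi^*\mathcal A)) \cong f_*\mathcal F(nD) \otimes \mathcal A^l$ then gives the required global generation.

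The main obstacle is obtaining uniformity in both $n$ and $l$ simultaneously: a direct application of Lemma~\ref{lem:glgen1} on $X$ is blocked because $D$ is not ample there, only nef and $f$-semi-ample. Passing to the semi-ample fibration turns $D$ into a divisor $L$ that is both nef on $Z$ and $\pi$-ample, so that the single shift $L + l_0' \pi^*\mathcal A$ becomes absolutely ample. The arithmetic decomposition $aL + l\pi^*\mathcal A = a_r H_Z + N'$ with $N'$ nef is what allows the bounds $n_0$ and $l_0$ to be chosen independently of each other.
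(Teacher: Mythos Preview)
Your proof is correct and follows essentially the same route as the paper's: pass to the semi-ample fibration $g:X\to Z$ so that $L$ is nef and $\pi$-ample, form an absolutely ample divisor $L+l_0'\pi^*\mathcal A$ (the paper simply takes $l_0'=1$, which already suffices since nef $+$ relatively ample $+$ pullback of ample is ample), and invoke Lemma~\ref{lem:glgen1}. The only cosmetic difference is that the paper absorbs the residues into the single sheaf $\mathcal F':=\bigoplus_{r=0}^{k-1}\mathcal F(rD)$ and thereby reduces outright to the case where $D$ is $f$-ample, whereas you keep the residues $r$ separate and take $a_{\max}$.
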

\begin{proof}
Since $D$ is $f$-semi-ample, there are projective morphisms
$\sigma:X\to W$ and $\tau:W\to Y$ with $\tau \circ \sigma=f$ such that 
$mD\sim \sigma^*D'$ for an $m\ge 1$ and 
a nef and $\tau$-ample Cartier divisor $D'$ on $Z$. 
Set $\mathcal F' := \bigoplus_{i=0}^{m-1}\mathcal F(iD)$. 
Replacing $f:X\to Y$, $D$ and $\mathcal F$ 
by $\tau:W\to Y$, $D'$ and $\sigma_*\mathcal F'$, respectively, 
we may assume that $D$ is $f$-ample. 
Since $D$ is nef and $f$-ample, $D+f^*A$ is ample, 
where $A$ is a Cartier divisor with $\mathcal O_Y(A)\cong \mathcal A$. 
Then by Lemma~\ref{lem:glgen1}, there is an $m_0\ge 1$ such that 
$$
f_* \mathcal F(nD+lf^*A) \cong f_* \mathcal F(nD) \otimes \mathcal A^l
$$
is globally generated for each $l,n \ge m_0$. 
\end{proof}
\begin{proof}[Proof of Theorem~\ref{thm:main-p2}]
We use the same notation as that of the proof of Theorem~\ref{thm:main-p1}. 
By Theorem~\ref{thm:main-p1}, it is enough to show that 
$
\mathcal I^{(e)}\otimes \mathcal L^n \otimes \mathcal A
$ 
is globally generated for $e\gg0$ with $e_0|e$. 
By Lemma~\ref{lem:glgen2}, there is an $l\ge1$ such that 
$$
f_*\mathcal O_X(mN+M) \otimes \mathcal A^l
$$
is globally generated for each $m\ge 1$ with $i|m$. 
We then have the following sequence of surjective morphisms 
for each $e\ge 1$ with $e_0|e$: 
\begin{align*}
\mathcal I^{(e)} \otimes \mathcal L^n \otimes \mathcal A
\twoheadleftarrow &
\left( {F_Y^e}_* \left( f_*\mathcal O_X((p^e-1)N+M) 
\otimes \mathcal A^{p^e} \right) \right) \otimes \mathcal L^n
\\ \cong & 
\left( {F_Y^e}_* \left( f_*\mathcal O_X((p^e-1)N+M) \otimes \mathcal A^l
\otimes \mathcal A^{p^e-l} \right) \right) \otimes \mathcal L^n
\\ \twoheadleftarrow & 
\left( {F_Y^e}_* \left(\bigoplus \mathcal O_Y \right)
\otimes \mathcal A^{p^e-l} \right) \otimes \mathcal L^n
\\ \cong & 
\bigoplus\left({F_Y^e}_*\mathcal A^{p^e-l} \right) \otimes \mathcal L^n
\end{align*}
When $e\gg0$, we see that the last sheaf is 
$0$-regular with respect to $\mathcal L$ 
by an argument similar to the proof of Theorem~\ref{thm:main-p1},
so it is globally generated, and hence so is 
$
\mathcal I^{(e)} \otimes \mathcal L^n \otimes \mathcal A.
$
\end{proof}
Finally, we prove Theorem~\ref{thm:main-0}. 
\begin{proof}[Proof of Theorem~\ref{thm:main-0}]
We first prove the statement on $\mathcal J_{\mathrm{NLC}}(X,\Delta)$. 
One can easily check that we may assume that 
the base field is an algebraically closed field. 
Let $\pi:Z\to X$ be a resolution of $(X,\Delta)$ 
with $K_Z+\Delta_Z=\pi^*(K_X+\Delta)$ 
such that $\mathrm{Supp}(\Delta_Z)$ is simple normal crossing. 
Put 
$
\Delta':=\{\Delta_Z\} +\Delta_Z^{=1}.
$ 
Then each coefficient in $\Delta'$ is at most one 
and $\mathrm{Supp}(\Delta')$ is simple normal crossing. 
Set $M':=\pi^*M - \lfloor \Delta_Z \rfloor +\Delta_Z^{=1}$. 
Then 
\begin{align*}
M' -(K_Z+\Delta') 
= & \pi^*M - \lfloor \Delta_Z \rfloor +\Delta_Z^{=1} 
-K_Z -\{\Delta_Z\} -\Delta_Z^{=1}
\\ = & \pi^*M -K_Z -\Delta_Z
=\pi^*(M-(K_X+\Delta)), 
\end{align*}
so $M'-(K_Z+\Delta')$ is nef and $g$-semi-ample, where $g:=f\circ\pi:Z\to Y$.
We also have 
$
\pi_*\mathcal O_Z(M') 
\cong \mathcal J_{\mathrm{NLC}}(X,\Delta)(M)
$
by the projection formula. 
Let $L$ (resp. $A$) be a Cartier divisor on $Y$ such that 
$\mathcal O_Y(L)\cong \mathcal L$ (resp. $\mathcal O_Y(A)\cong \mathcal A$). 
Put $L^{(i)}:=(n-i)L +\frac{1}{2}A$ for each $0<i\le n$. 
Then each $L^{(i)}$ is ample, and  
$M'-(K_Z+\Delta') +g^*L^{(i)}$ is semi-ample. 
Indeed, since $M'-(K_Z+\Delta')$ is nef and $g$-semi-ample, 
there are projective morphisms $\sigma:X\to W$ and $\tau:W\to Y$ 
with $\tau\circ \sigma=g$ such that 
$M'-(K_Z+\Delta') \sim_{\mathbb Q}\sigma^*N$ 
for a nef and $\tau$-ample $\mathbb Q$-Cartier divisor $N$, 
and then $N+\tau^*L^{(i)}$ is ample, 
so $M'-(K_Z+\Delta')+g^*L^{(i)}$ is semi-ample. 
Therefore, we can find an effective $\mathbb Q$-divisor 
$F^{(i)}\sim_{\mathbb Q} M'-(K_Z+\Delta')+g^*L^{(i)}$ such that 
the support of $\Delta^{(i)}:=\Delta'+F^{(i)}$ is simple normal crossing and 
that each coefficient in $\Delta^{(i)}$ is at most one. 
Then 
\begin{align*}
& M' +g^*((n-i)L+A)-\left(K_Z+\Delta^{(i)}\right)
\\ = & M' +g^*((n-i)L+A)-(K_Z+\Delta') -F^{(i)}
\\ \sim_{\mathbb Q} &
M' +g^*((n-i)L+A)-(K_Z+\Delta') -M'+K_Z+\Delta' -g^*L^{(i)}
\\ = & 
g^*\left((n-i)L+A -(n-i)L -\frac{1}{2}A \right)
=g^*\left( \frac{1}{2}A \right),
\end{align*}
so we can apply \cite[Theorem~3.2]{Amb03} or \cite[Theorem~6.3]{Fuj11}, 
from which we obtain that 
\begin{align*}
0 = & H^i\big(Y, g_*\mathcal O_Z\big(M'-g^*((n-i)L+A) \big)\big)
\\ \cong & H^i\big(Y, 
f_*\big(\mathcal J_{\mathrm{NLC}}(X,\Delta)\big(M-f^*((n-i)L+A) \big) \big) \big)
\\ \cong & H^i\left(Y, f_*(\mathcal J_{\mathrm{NLC}}(X,\Delta)(M))
\otimes \mathcal L^{n-i} \otimes \mathcal A \right) 
\end{align*}
for each $0<i\le n$. 
This implies that 
$
f_*(\mathcal J_{\mathrm{NLC}}(X,\Delta)(M))
\otimes\mathcal L^n \otimes \mathcal A
$ 
is $0$-regular with respect to $\mathcal L$ 
in the sense of Castelnuovo--Mumford, 
and hence it is globally generated (\cite[Theorem~1.8.5]{Laz04I}). 
 
The statement on $\mathcal J(X,\Delta)$ can be proved by an argument similar to the above, by putting $\Delta':=\{\Delta_Z\}$ and $M':=\pi^*M-\lfloor\Delta_Z\rfloor$. 
\end{proof}
\section{Question} \label{section:question}
In this section, we consider the following question:
\begin{ques} \label{ques:Fujita}
Let the base field be an algebraically closed field of characteristic zero. 
Let $X$ be a normal projective variety and 
let $\Delta$ be an effective $\mathbb Q$-Weil divisor on $X$ such that 
$(X,\Delta)$ is log canonical. 
Let $M$ be a Cartier divisor on $X$. 
Let $f:X\to Y$ be a surjective morphism to 
a smooth projective variety $Y$ of dimension $n$. 
If $M-(K_X+\Delta)$ is nef and $f$-semi-ample, then is 
$$
f_*\mathcal O_X(M) \otimes \mathcal L^{n+1}
$$
generated by its global sections for an ample line bundle $\mathcal L$ on $Y$?
\end{ques}
This question is a generalization of Fujita's freeness conjecture. 

In positive characteristic, 
Question~\ref{ques:Fujita} has been already answered negatively, 
even if we employ $S^0f_*(\sigma(X,\Delta) \otimes \mathcal O_X(M))$
instead of $f_*\mathcal O_X(M)$. 
Indeed, Gu--Zhang--Zhang~\cite{GZZ22} constructed a smooth projective surface $S$
on which there is an ample Cartier divisor $A$ such that 
$$
f_*\mathcal O_S(K_S) \otimes A^3
\cong S^0f_*(\sigma(S,0)\otimes \mathcal O_S(K_S)) \otimes A^3
\cong \omega_S \otimes A^3
$$
is not globally generated, where $f=\mathrm{id}:S\to S$. 

We answer affirmatively Question~\ref{ques:Fujita} 
when $Y$ is a smooth projective curve. 
\begin{thm} \label{thm:curve}
Let the base field be an algebraically closed field of characteristic $p\ge 0$. 
Let $X$ be a normal projective variety and 
let $\Delta$ be a $\mathbb Q$-Weil divisor on $X$ 
such that $K_X+\Delta$ is $\mathbb Q$-Cartier. 
Let $f:X\to Y$ be a morphism to a smooth projective curve $Y$. 
Let $M$ be a Cartier divisor on $X$ such that 
$M-(K_X+\Delta)$ is nef and $f$-semi-ample, 
and let $\mathcal L$ be an ample line bundle on $Y$. 
\begin{enumerate}[$(1)$]
\item Suppose that $p=0$. Then 
$$
f_*(\mathcal J_{\mathrm{NLC}}(X,\Delta)(M)) \otimes \mathcal L^l
$$
is generated by its global sections for $l\ge 2$.  
\item Suppose that $p>0$ and 
$i(K_X+\Delta)$ is Cartier for an integer $i>0$ not divisible by $p$. 
Then 
$$
\left( S^0f_*(\sigma(X,\Delta) \otimes \mathcal O_X(M)) \right) 
\otimes \mathcal L^l
$$
is generated by its global sections for $l\ge 2$.  
\end{enumerate}
\end{thm}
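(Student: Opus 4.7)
The plan is to reduce both parts to the cohomological vanishing
\[
H^1(Y, \mathcal{F} \otimes \mathcal{L}^l \otimes \mathcal{O}_Y(-y)) = 0
\]
for every closed point $y \in Y$, where $\mathcal{F}$ denotes $f_*(\mathcal{J}_{\mathrm{NLC}}(X,\Delta)(M))$ in case~(1) and $S^0f_*(\sigma(X,\Delta)\otimes\mathcal{O}_X(M))$ in case~(2). On the $1$-dimensional $Y$ this vanishing implies global generation of $\mathcal{F}\otimes\mathcal{L}^l$ via the short exact sequence $0\to\mathcal{F}\otimes\mathcal{L}^l(-y)\to\mathcal{F}\otimes\mathcal{L}^l\to(\mathcal{F}\otimes\mathcal{L}^l)\otimes k(y)\to 0$, using torsion-freeness of $\mathcal{F}$ (a subsheaf of $f_*\mathcal{O}_X(M)$). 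The crucial input is that, writing $L$ for a Cartier divisor representing $\mathcal{L}$, the divisor $C:=lL-y$ has degree $l\deg L-1\ge 1$ for $l\ge 2$ and hence is ample on the curve $Y$.

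For part~(1), I would follow the framework of Theorem~\ref{thm:main-0}. After a log resolution $\pi:Z\to X$ with $K_Z+\Delta_Z=\pi^*(K_X+\Delta)$, set $\Delta':=\{\Delta_Z\}+\Delta_Z^{=1}$, $M':=\pi^*M-\lfloor\Delta_Z\rfloor+\Delta_Z^{=1}$, and $g:=f\circ\pi$; then $g_*\mathcal{O}_Z(M')\cong f_*(\mathcal{J}_{\mathrm{NLC}}(X,\Delta)(M))$ and $M'-(K_Z+\Delta')$ is nef and $g$-semi-ample. Setting $L^{(1)}:=\tfrac{1}{2}C$, the argument of Theorem~\ref{thm:main-0} shows that $M'-(K_Z+\Delta')+g^*L^{(1)}$ is semi-ample on $Z$, so Bertini produces an effective general $F\sim_{\mathbb{Q}}M'-(K_Z+\Delta')+g^*L^{(1)}$ such that $\Delta^*:=\Delta'+F$ has SNC support and coefficients in $[0,1]$. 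A direct calculation gives $M'+g^*C-(K_Z+\Delta^*)\sim_{\mathbb{Q}}g^*(\tfrac{1}{2}C)$, the pullback of an ample $\mathbb{Q}$-divisor on $Y$, so \cite[Theorem~3.2]{Amb03} (or \cite[Theorem~6.3]{Fuj11}) yields $H^1(Y,g_*\mathcal{O}_Z(M'+g^*C))=0$; by the projection formula, this is the desired vanishing.

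For part~(2), Theorem~\ref{thm:main-p1} identifies $\mathcal{I}^{(e)}:=\mathrm{Im}(f_*\phi^{(e)}_{(X,\Delta)}(M))$ with $S^0f_*(\sigma(X,\Delta)\otimes\mathcal{O}_X(M))$ for $e\gg 0$ with $e_0\mid e$, so it suffices to globally generate $\mathcal{I}^{(e)}\otimes\mathcal{L}^l$ for such $e$. Imitating the surjection chain in the proof of Theorem~\ref{thm:main-p2} with $\mathcal{A}=\mathcal{L}$: Lemma~\ref{lem:glgen2} supplies an integer $l_0\ge 1$ (independent of $e$) such that $f_*\mathcal{O}_X((p^e-1)N+M)\otimes\mathcal{L}^{l_0}$ is globally generated for $e\gg 0$ (where $N:=M-(K_X+\Delta)$), yielding a surjection $\bigoplus{F_Y^e}_*\mathcal{L}^{p^el-l_0}\twoheadrightarrow\mathcal{I}^{(e)}\otimes\mathcal{L}^l$. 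By the curve criterion, it suffices to show $H^1(Y,({F_Y^e}_*\mathcal{L}^{p^el-l_0})(-y))=0$; the projection formula and the finiteness of $F_Y^e$ identify this with $H^1(Y,\mathcal{L}^{p^el-l_0}(-p^ey))$, the cohomology of a line bundle of degree $p^e(l\deg L-1)-l_0\deg L$. For $l\ge 2$ this exceeds $2g-2$ once $e$ is large, so Serre vanishing on $Y$ finishes the argument.

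The most delicate step is part~(1)'s semi-ampleness of $M'-(K_Z+\Delta')+g^*L^{(1)}$. Factoring $g=\tau\circ\sigma$ with $M'-(K_Z+\Delta')\sim_{\mathbb{Q}}\sigma^*\widetilde{N}$ for $\widetilde{N}$ nef and $\tau$-ample on $W$, this reduces to the claim that $\widetilde{N}+\tau^*L^{(1)}$ is ample on $W$; writing $\widetilde{N}+\tau^*L^{(1)}=\tfrac{1}{m}(m\widetilde{N}+\tau^*A_Y)+\tau^*(L^{(1)}-\tfrac{1}{m}A_Y)$ for a suitable ample $A_Y$ on $Y$ and $m\gg 0$ (so that $m\widetilde{N}+\tau^*A_Y$ is ample and $L^{(1)}-\tfrac{1}{m}A_Y$ remains ample on $Y$) exhibits the sum as an ample $\mathbb{Q}$-divisor plus the pullback of an ample $\mathbb{Q}$-divisor. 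All remaining steps---the Bertini choice of $F$, the projection formula manipulations, and Serre vanishing on the curve---are routine.
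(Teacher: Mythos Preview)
Your proposal is correct and follows essentially the same route as the paper: reduce global generation on the curve to the vanishing of $H^1$ after twisting by $\mathcal{O}_Y(-y)$, observe that $\mathcal{L}^l(-y)$ is ample for $l\ge 2$, and then feed this ample twist into the proofs of Theorem~\ref{thm:main-0} (for part~(1)) and Theorem~\ref{thm:main-p2} (for part~(2)). The only organizational difference is that in part~(2) the paper first reduces to $H^1\big(Y,\big(S^0f_*(\sigma(X,\Delta)\otimes\mathcal O_X(M))\big)\otimes\mathcal A\big)=0$ for the single ample bundle $\mathcal A=\mathcal L^l(-y)$ and then applies the surjection from $\bigoplus{F_Y^e}_*\mathcal A^{p^e-l_0}$, whereas you build the surjection $\bigoplus{F_Y^e}_*\mathcal L^{p^el-l_0}\twoheadrightarrow\mathcal I^{(e)}\otimes\mathcal L^l$ with a fixed $\mathcal L$ first and twist by $-y$ afterwards; your ordering has the minor advantage that the constant $l_0$ from Lemma~\ref{lem:glgen2} is visibly independent of $y$, so the required $e$ is uniform over $Y$.
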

\begin{proof}
First, we prove (1). 
Take a closed point $y\in Y$. 
It is enough to show that 
$$
H^1(Y, f_*(\mathcal J_{\mathrm{NLC}}(X,\Delta)(M)) \otimes \mathcal L^l (-y)) =0.
$$
This follows from the proof of Theorem~\ref{thm:main-0}, 
since $\mathcal L^l(-y)$ is ample. 

Next, we show (2). By the same argument as the above, 
it is enough to show that 
$$
H^1\left( Y, \left(S^0f_*(\sigma(X,\Delta)\otimes \mathcal O_X(M))\right) 
\otimes \mathcal A \right)
=0
$$
for an ample line bundle $A$ on $Y$. 
By Theorem~\ref{thm:main-p1} and the proof of Theorem~\ref{thm:main-p2}, 
we have the surjective morphism 
$$
\bigoplus {F^e_Y}_* \mathcal A^{p^e-l} 
\twoheadrightarrow 
\left(S^0f_*(\sigma(X,\Delta) \otimes \mathcal O_X(M))\right) \otimes \mathcal A
$$
for an $l\ge 1$ and each $e$ large and divisible enough. 
Hence, it suffices to prove that 
$$
H^1\left(Y, {F_Y^e}_* \mathcal A^{p^e-l} \right) =0, 
$$
but this follows from an argument similar to 
that of the proof of Theorem~\ref{thm:main-p1}. 
\end{proof}
\bibliographystyle{abbrv}
\bibliography{ref.bib}
\end{document}